\theoremstyle{plain} 
\newtheorem{thm}{Theorem}[section]
\newtheorem{lem}[thm]{Lemma} 
\newtheorem{prop}[thm]{Proposition} 
\newtheorem{cor}[thm]{Corollary} 
\theoremstyle{definition} 
\newtheorem{defn}[thm]{Definition}
\newtheorem{rem}[thm]{Remark} 
\newtheorem{quest}[thm]{Question}
\newtheorem{conj}[thm]{Conjecture}
\DeclareMathOperator{\Z}{\mathbb{Z}}
\DeclareMathOperator{\CO}{\mathcal{O}}
\DeclareMathOperator{\Ku}{\mathrm{Ku}}
\numberwithin{equation}{section}
\begin{document}

\title[]{Rational cubic fourfolds in Hassett divisors}

\author{Song Yang and Xun Yu}
\address{Center for Applied Mathematics, Tianjin University, Weijin Road 92, Tianjin 300072, China}%

\email{syangmath@tju.edu.cn, xunyu@tju.edu.cn}%

\date{\today}

\begin{abstract}
We prove that every Hassett's Noether-Lefschetz divisor of special cubic fourfolds contains a union of three codimension-two subvarieties, parametrizing rational cubic fourfolds, in the moduli space of smooth cubic fourfolds. 
\end{abstract}

\subjclass[2010]{14C30, 14E08, 14M20}
\keywords{Hodge theory, special cubic fourfolds, lattice}

\maketitle


\section{Introduction}

The rationality problem of smooth cubic fourfolds is one of the most widely open problems in algebraic geometry; 
we refer to the survey \cite{Has16} for a comprehensive progress.
It has been known that all smooth cubic surfaces are rational since the 19th century.
In 1972, Clemens-Griffiths \cite{CG72} proved that 
all smooth cubic threefolds are nonrational. 
For smooth cubic fourfolds, however, the situation is very mysterious.
It is expected that a very general smooth cubic fourfold should be nonrational (cf. \cite{Has99, Has00}). 
Until now, many examples of smooth rational cubic fourfolds are known, 
but the existence of a smooth nonrational cubic fourfold is still unknown.

Using Hodge theory and lattice theory, 
Hassett \cite{Has00} introduced the notion of {\it special cubic fourfolds} (see Definition \ref{specialcubic}).
Simultaneously, Hassett \cite[Theorem 1.0.1]{Has00} gave a countably infinite 
irreducible divisors $\mathcal{C}_{d}$ of special cubic fourfolds 
in the moduli space $\mathcal{C}$ of smooth cubic fourfolds and showed that $\mathcal{C}_{d}$ is nonempty if and only if 
$d>6$ and  $d\equiv 0, 2\; (\mathrm{mod}\, 6)$. 
Such a nonempty $\mathcal{C}_{d}$ 
is called a {\it Hassett's Noether-Lefschetz divisor} (for short a {\it Hassett divisor}).

Currently, there exist two popular point of views toward the rationality of smooth cubic fourfolds and both have associated $K3$ surfaces:
\begin{itemize}
\item Hassett's Hodge-theoretic result (\cite[Theorem 5.1.3]{Has00}): 
        a smooth cubic fourfold $X$ has a Hodge-theoretically associated $K3$ surface if and only if $X\in \mathcal{C}_{d}$ for some {\it admissible value} $d$ (i.e., $d>6$, $d\equiv 0, 2\; (\mathrm{mod}\, 6)$, $4 \nmid d$, $9 \nmid d$ and $p \nmid d$ for any odd prime $p \equiv 2\;  (\mathrm{mod}\, 3)$);

\item Kuznetsov's derived categorical conjecture (\cite[Conjecture 1.1]{Kuz10}): a smooth cubic fourfold $X$ is rational if and only if its Kuznetsov component $\Ku(X)$ is derived equivalent to a $K3$ surface (i.e., $\Ku(X)$ is called {\it geometric}), where $\Ku(X)$ is the right orthogonal to $\{\CO_{X}, \CO_{X}(1), \CO_{X}(2)\}$.
\end{itemize}

It is important to notice that Kuznetsov's conjecture implies that a very general cubic fourfold is not rational, since for a very general cubic fourfold its Kuznetsov component can not be geometric.
Addinton-Thomas \cite[Theorem 1.1]{AT14} showed that for a smooth cubic fourfold $X$ if $\Ku(X)$ is geometric then $X\in \mathcal{C}_{d}$ for some admissible $d$, and conversely for any admissible value $d$,  the set of cubic fourfolds $X\in  \mathcal{C}_{d}$ for which $\Ku(X)$ is geometric is a Zariski open dense subset; see also Huybrechts \cite{Huy17} for the twisted version and a further study. 
Recently, based on Bridgeland stability conditions on $\Ku(X)$ constructed in \cite[Theorem 1.2]{BLMS17}, Bayer-Lahoz-Macr\`{i}-Nuer-Perry-Stellari \cite[Corollary 29.7]{BLMNPS19} proved that for any admissible value $d$, $\Ku(X)$ of {\it every} $X\in \mathcal{C}_{d}$ is geometric. So we now know that for a smooth cubic fourfold $X$ its Kuznetsov component $\Ku(X)$ is geometric if and only if $X\in \mathcal{C}_{d}$ for some admissible value $d$.
Then one can restate Kuznetsov's conjecture as the following equivalent form.

\begin{conj}\label{mainconj}
A smooth cubic fourfold $X$ is {\it rational} if and only if $X\in \mathcal{C}_{d}$ for some admissible value $d$.
\end{conj}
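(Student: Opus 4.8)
The plan is to use the equivalence recalled just above — that $\Ku(X)$ is geometric if and only if $X \in \mathcal{C}_{d}$ for some admissible $d$ — to rewrite Conjecture \ref{mainconj} in its original homological form: $X$ is rational if and only if $\Ku(X)$ is geometric, i.e. $\Ku(X) \simeq \D^{b}(S)$ for a smooth projective K3 surface $S$ (cf. \cite{Kuz10}). I would then attack the two implications separately, since they are of genuinely different flavor and require genuinely different tools.

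For the ``if'' direction (geometricity implies rationality), I would try to pass from the abstract equivalence $\Ku(X) \simeq \D^{b}(S)$ to an explicit birational parametrization of $X$. The natural vehicle is the theory of moduli spaces of Bridgeland-stable objects in $\Ku(X)$ built on the stability conditions of \cite{BLMS17}: for suitable choices these moduli spaces are hyperk\"ahler (the Fano variety of lines, the LLSvS eightfold, and their relatives), and one would exploit a universal family together with the associated K3 surface $S$ to produce either a rational map $X \dashrightarrow \mathbb{P}^{4}$ or a birational equivalence with a projective bundle. At present this can only be executed case by case: rationality is known for $d=14$ (pfaffian cubics) and for $d=26,38,42$ by explicit Cremona-type constructions, but none of these is uniform in $d$. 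Producing a single construction valid for all admissible $d$ simultaneously is the first serious gap.

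For the ``only if'' direction (rationality implies geometricity) I would argue by contrapositive: if $X$ lies in no admissible $\mathcal{C}_{d}$ — in particular if $X$ is very general — then $\Ku(X)$ is not geometric, and I want to conclude that $X$ is nonrational. The strategy suggested by Kuznetsov's philosophy is to show that the ``K3-type'' summand of $\Ku(X)$ is a birational obstruction: by Orlov's blow-up formula, any resolution of a birational map $X \dashrightarrow \mathbb{P}^{4}$ decomposes $\D^{b}$ of the blow-up into $\D^{b}(\mathbb{P}^{4})$ and the derived categories of the blown-up centers, so were $X$ rational its Hodge-theoretic K3 structure would have to be accounted for by an honest K3 surface appearing among those centers. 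Making this rigorous demands either a birational-invariance statement for the relevant piece of the semiorthogonal decomposition, or a Chow-theoretic incarnation via decomposition of the diagonal together with the integral Hodge conjecture for cubic fourfolds, degenerating $X$ to a mildly singular cubic so as to propagate nonrationality in the style of the specialization method for rationality.

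The main obstacle is precisely this second implication. No smooth cubic fourfold is currently known to be nonrational, and neither the derived-categorical route (derived categories, and even their K3-type components, are not known to be birational invariants) nor the degeneration route (no cubic fourfold with bad reduction is known to carry the required nonrational behavior) has so far yielded a single nonrational example. Thus the ``if'' direction reduces to finding a uniform rational construction, whereas the ``only if'' direction requires a genuinely new nonrationality technique for fourfolds; it is this that constitutes the heart of the difficulty and the reason the statement remains, for now, a conjecture.
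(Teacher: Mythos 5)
You were asked to prove Conjecture \ref{mainconj}, but this statement is precisely that --- a conjecture --- and the paper contains no proof of it. It is the equivalent restatement of Kuznetsov's rationality conjecture obtained by combining Addington--Thomas \cite{AT14} with Bayer--Lahoz--Macr\`i--Nuer--Perry--Stellari \cite{BLMNPS19} (so that ``$\Ku(X)$ is geometric'' can be replaced by ``$X\in\mathcal{C}_d$ for some admissible $d$''), and it remains open in both directions. Your proposal correctly recognizes this: you do not claim a proof, and your account of the two implications and of why each one is currently out of reach is accurate. In particular, you are right that the ``if'' direction is known only through case-by-case explicit constructions (the paper cites $d=14,26,38$; your mention of $d=42$ postdates the paper but is also correct), and that the ``only if'' direction is blocked by the absence of any known nonrational smooth cubic fourfold and by the fact that neither $\D^{b}(X)$ nor its Kuznetsov component is known to be a birational invariant.

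The only useful comparison to make is with what the paper actually does prove, which is much weaker partial evidence for the ``if'' direction: Theorem \ref{mainthm1} shows that every Hassett divisor $\mathcal{C}_d$ (admissible or not) contains three codimension-two subvarieties parametrizing rational cubic fourfolds. The method is purely lattice-theoretic --- one exhibits explicit rank-$3$ sublattices $h^2\in M\subset L$ forcing a cubic to lie simultaneously in $\mathcal{C}_d$ and in one of $\mathcal{C}_{14},\mathcal{C}_{26},\mathcal{C}_{38}$, then invokes the known rationality on those three divisors. Note that this technique can never prove the conjecture itself: it only produces special loci inside $\mathcal{C}_d$ consisting of cubics that are rational for reasons already known, and it says nothing about rationality of a general member of an admissible $\mathcal{C}_d$, nor anything at all about nonrationality. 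So your assessment stands: closing either implication of Conjecture \ref{mainconj} requires tools (a uniform rationality construction, or a nonrationality obstruction surviving to smooth cubic fourfolds) that neither your sketch nor the paper possesses.
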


 The first three admissible values are $14, 26, 38$. Every cubic fourfold in $\mathcal{C}_{14}$ is rational \cite{Fan43, BRS19}; see also \cite[Theorem 2]{RS19} for a different proof. 
Based on Kontsevich-Tschinkel \cite[Theorem 1]{KT19}, 
Russo-Staglian\`{o} \cite[Theorems 4, 7]{RS19} finally showed that every cubic fourfold in $\mathcal{C}_{26}$ and $\mathcal{C}_{38}$ is rational; 
see also \cite{RS18} for the construction of explicit birational maps. 
So far ``if '' part of Conjecture \ref{mainconj} has been confirmed only for the three Hassett divisors $\mathcal{C}_{14},\mathcal{C}_{26},\mathcal{C}_{38}$. Thus finding rational cubic fourfolds in other Hassett divisors is of interest. The main result of this paper is the following.

\begin{thm}[=Theorem \ref{mainthm1}]\label{mainthm}
Every Hassett divisor $\mathcal{C}_{d}$ contains a union of three codimension-two subvarieties  in $\mathcal{C}$ parametrizing rational cubic fourfolds.
\end{thm}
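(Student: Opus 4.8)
\emph{Proof strategy.} Since every cubic fourfold lying in $\mathcal{C}_{14}$, $\mathcal{C}_{26}$ or $\mathcal{C}_{38}$ is rational, it suffices to produce, for each $d'\in\{14,26,38\}$, a nonempty codimension-two subvariety of $\mathcal{C}$ contained in $\mathcal{C}_{d}\cap\mathcal{C}_{d'}$: the three desired subvarieties are then suitable irreducible components of $\mathcal{C}_{d}\cap\mathcal{C}_{14}$, $\mathcal{C}_{d}\cap\mathcal{C}_{26}$ and $\mathcal{C}_{d}\cap\mathcal{C}_{38}$, and they are mutually distinct because $14,26,38$ are. (When $d$ itself equals one of $14,26,38$ the corresponding intersection must be replaced by a locus carrying two distinct labellings of the same discriminant, which I would treat as a minor variant of the same construction.)

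A smooth cubic fourfold $X$ lies simultaneously in $\mathcal{C}_{d}$ and $\mathcal{C}_{d'}$ exactly when its lattice of Hodge classes $H^{2,2}(X,\Z):=H^4(X,\Z)\cap H^{2,2}(X)$ contains a rank-two labelling $K_d$ of discriminant $d$ and a rank-two labelling $K_{d'}$ of discriminant $d'$, both containing the square $h^2$ of the hyperplane class with $(h^2)^2=3$. When $K_d$ and $K_{d'}$ span a rank-three lattice, the associated locus has codimension two in $\mathcal{C}$ (each unit of rank beyond $\langle h^2\rangle$ costs one codimension). Hence the problem reduces to constructing a positive-definite lattice $M\ni h^2$ of rank three and signature $(3,0)$ that contains labellings of discriminants $d$ and $d'$ and that is realized as $H^{2,2}(X,\Z)$ of some smooth cubic fourfold. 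By the surjectivity of the period map (Laza, Looijenga) together with Hassett's lattice analysis, such an $M$ is realizable --- and cuts out a codimension-two family of smooth cubics --- provided $M$ admits a primitive embedding into the cubic lattice $L\cong H^4(X,\Z)\cong I_{21,2}$ and $M$ contains no labelling of discriminant $2$ or $6$; the latter two would force $X$ into the empty loci $\mathcal{C}_2$, $\mathcal{C}_6$.

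To build $M$ I would glue $K_d$ and $K_{d'}$ along $h^2$: choosing a basis $h^2,u,v$ with $h^2,u$ spanning $K_d$ and $h^2,v$ spanning $K_{d'}$, the Gram matrix becomes
\[
\begin{pmatrix} 3 & a & b\\ a & m & c\\ b & c & n \end{pmatrix},\qquad 3m-a^2=d,\quad 3n-b^2=d',
\]
in which only the cross term $c=\langle u,v\rangle$ is free. For $|c|$ small, Sylvester's criterion gives positive-definiteness, and since the orthogonal complement of $M$ in $L$ has signature $(18,2)$ and rank $20$, Nikulin's existence criterion supplies a primitive embedding $M\hookrightarrow L$. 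The delicate requirement is that the rank-two even positive-definite lattice $N:=M\cap(h^2)^\perp$, together with the way $\langle h^2\rangle\oplus N$ is glued inside $M$, represent neither a discriminant-$6$ labelling (a vector of square $2$ orthogonal to $h^2$) nor a discriminant-$2$ labelling (a class $w$ of square $6$ with $w+h^2\in 3L$).

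The main obstacle is precisely this last verification, carried out uniformly over the infinitely many admissible $d$: I must show that $c$ can always be chosen so that $M$ stays positive-definite of rank three while having no discriminant-$2$ or discriminant-$6$ labelling. I expect the argument to split according to $d\bmod 6$ and to the three values of $d'$, with a separate check of the finitely many small $d$ and of the coincidences $d=d'$. Once the three lattices are produced, realizability and the codimension-two count follow formally from the period-map description, and every cubic fourfold in the resulting families is rational because it lies in $\mathcal{C}_{14}$, $\mathcal{C}_{26}$ or $\mathcal{C}_{38}$.
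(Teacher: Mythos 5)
Your reduction is the same as the paper's: intersect $\mathcal{C}_d$ with $\mathcal{C}_{14}$, $\mathcal{C}_{26}$, $\mathcal{C}_{38}$ by realizing a rank-three positive definite lattice $M\ni h^2$ containing labellings of both discriminants, and invoke the period map. But the step you yourself call ``the main obstacle'' --- choosing the cross term $c$ so that $M$ is positive definite and contains no $K_2$ or $K_6$ sublattice, uniformly over all $d$ satisfying $(\star)$ --- is exactly the mathematical content of the theorem, and your proposal does not carry it out. The paper avoids any case-by-case search over $c$ by writing down explicit vectors in $L=E_8^{\oplus 2}\oplus U_1\oplus U_2\oplus I_{3,0}$: it takes $\alpha_i=e_i+n_if_i$ in the two hyperbolic planes (adding unit vectors $(0,1,0)$, $(0,0,1)\in I_{3,0}$ when $d_i\equiv 2\pmod 6$), so that the resulting quadratic form is a sum of squares like $3x^2+2n_1y^2+2n_2z^2$ (or $x^2+2n_1y^2+2n_2z^2+(x+y)^2+(x+z)^2$), which is visibly $\ge 3$ on nonzero vectors. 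By the paper's Lemma \ref{key-observation}, this single inequality is equivalent to the absence of $K_2$ and $K_6$ sublattices (both are detected by $M$ representing $1$ or $2$), and saturation is read off from the basis. Your reformulation of the $K_2$ obstruction as ``$w$ of square $6$ with $w+h^2\in 3L$'' is both unnecessary and harder to check than the paper's criterion. Until you exhibit such lattices (or prove the existence of an admissible $c$ in general), the proof is incomplete.

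A secondary gap: the three codimension-two loci are not ``mutually distinct because $14,26,38$ are.'' A priori an irreducible component of $\mathcal{C}_d\cap\mathcal{C}_{14}$ could coincide with one of $\mathcal{C}_d\cap\mathcal{C}_{26}$, since a single cubic fourfold can lie in many Hassett divisors at once. The paper rules this out by computing the determinants of the three rank-three lattices $A(X_1),A(X_2),A(X_3)$ (namely $\tfrac{d\,d'}{3}$ or $\tfrac{d\,d'-1}{3}$ for $d'=14,26,38$), observing they are pairwise different, hence no saturated embedding $A(X_i)\subset A(X_j)$ exists for $i\neq j$ and the loci $\mathcal{C}_{A(X_i)}$ are pairwise distinct. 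You would need an analogous argument.
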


The idea of the proof is simple: 
we first show any two Hassett divisors intersect by Theorem \ref{mainprop}, 
which is of independent interest (for considerations of the intersections among Hassett divisors, see \cite{Has99, AT14, ABBVA14, BRS19} etc.), 
and finally we consider the intersections $\mathcal{C}_{d}\cap\mathcal{C}_{14}$, $\mathcal{C}_{d}\cap\mathcal{C}_{26}$ and $\mathcal{C}_{d}\cap\mathcal{C}_{38}$ for every Hassett divisor $\mathcal{C}_{d}$. 

Throughout this paper, we work over the complex number field $\mathbb{C}$.

\subsection*{Acknowledgements}
We would like to thank Professors Keiji Oguiso and Paolo Stellari for useful conversations.
This work is partially supported by the National Natural Science Foundation of China (Grant No. 11701413, No. 11701414).



\section{Lattice and Hodge theory for cubic fourfolds}

In this section, we collect some known results on Hodge structures and lattices associated with smooth cubic fourfolds. We refer to \cite{BD85, Has00, Has16, Huy18} for more detailed discussions, 
specially for the Hodge-theoretic aspect, 
and to \cite{Ser73, Nik80} for the basics of abstract lattice theory.

The cubic hypersurfaces in $\mathbb{P}^{5}$ are parametrized by  
$\mathbb{P}(H^{0}(\mathbb{P}^{5}, \mathcal{O}(3)))\cong \mathbb{P}^{55}$.
Moreover, the smooth cubic hypersurfaces form a Zariski open dense subset $\mathcal{U}\subset \mathbb{P}^{55}$.
Then the moduli space of smooth cubic fourfolds is the quotient space
$$
\mathcal{C}:=\mathcal{U}// \mathrm{PGL(6, \mathbb{C})}
$$
which is a $20$-dimensional quasi-projective variety.

Let $X$ be a smooth cubic fourfold.
Then the cohomology $H^{\ast}(X, \Z)$ is torsion-free and the middle Hodge cohomology of $X$ is as follows:
\begin{equation*}
\begin{array}{cccc}
\begin{matrix}
&0 && 1 && 21 && 1 && 0.
\end{matrix}
\end{array}
\end{equation*}
The Hodge-Riemann bilinear relations imply that $H^{4}(X, \Z)$ is a unimodular lattice under the intersection form $(\,.\,)$ of signature $(21,2)$.
Furthermore, as abstract lattices, 
\cite[Proposition 2.1.2]{Has00} implies the middle cohomology and the primitive cohomology 
$$
L:=H^{4}(X, \Z)
\simeq E_{8}^{\oplus 2}\oplus U^{\oplus 2}\oplus I_{3,0}
$$
$$
L^{0}:=(h^{2})^{\perp}=H_{\mathrm{prim}}^{4}(X, \Z)
\simeq E_{8}^{\oplus 2}\oplus U^{\oplus 2}\oplus A_{2}
$$
where the square of the hyperplane class $h$ 
is given as $h^{2}=(1, 1, 1)\in I_{3,0}$ of which the intersection form is given by the identity matrix of rank $3$,
$
A_{2}=\small{\left(\begin{array}{cc} 
2 & 1 \\
1 & 2 
\end{array} \right)},
$
$
U=\small{\left(\begin{array}{cc} 
0 & 1 \\
1 & 0 
\end{array} \right)}
$
the hyperbolic plane, $E_{8}$ is the unimodular positive definite even lattice of rank $8$.
Note that $L^{0}$ is an even lattice.

\begin{defn}[Hassett \cite{Has00}]\label{specialcubic}
A smooth cubic fourfold $X$ is called {\it special} if it contains an algebraic surface not homologous to a complete intersection. 
\end{defn}

The integral Hodge conjecture holds for smooth cubic fourfolds (\cite[Theorem 18]{Voi07} or see also \cite[Corollary 29.8]{BLMNPS19} for a new proof). Thus, a smooth cubic fourfold $X$ is {\it special} if and only if the rank of the positive definite lattice
$$
A(X):=H^{4}(X, \Z)\cap H^{2,2}(X)
$$
is at least $2$.

\begin{defn}[Hassett \cite{Has00}]
A {\it labelling} of a special cubic fourfold consists of a positive definite rank two saturated (i.e. the quotient group $A(X)/K$ is torsion free) sublattice
$$
h^{2}\in K\subset A(X),
$$
and its discriminant $d$ is the determinant of the intersection form on $K$.
\end{defn}

In \cite{Has00}, Hassett defined $\mathcal{C}_{d}$ as the set of special cubic fourfolds $X$ with labelling of discriminant $d$. 
Moreover, Hassett \cite[Theorem 1.0.1]{Has00} showed that $\mathcal{C}_{d}\subset \mathcal{C}$ is an irreducible divisor and is nonempty if and only if 
\begin{equation}\tag{$\star$}
d>6 \;\;\; \textrm{and}\;\;\; d\equiv 0, 2\; (\mathrm{mod}\, 6).
\end{equation}

The following proposition is a generalization of \cite[Theorems 1.0.1]{Has00}.

\begin{prop}[{\cite[Proposition 12 and page 43]{Has16}}]\label{crucial}

Fix a positive definite lattice $M$ of rank $r\ge 2$ admitting a saturated embedding 

$$h^2\in M\subset L.$$  We denote by $\mathcal{C}_M\subset \mathcal{C}$ the smooth cubic fourfolds $X$ admitting algebraic classes with this lattice structure $$h^2\in M\subset A(X)\subset L.$$ Then $\mathcal{C}_M$ has codimension $r-1$ and there exists $X\in \mathcal{C}_M$ with $A(X)=M$, provided $\mathcal{C}_M$ is nonempty. Moreover, $\mathcal{C}_M$ is nonempty if and only if there exists no sublattice $h^{2}\in K\subset M$ with $K=K_{2}$ or $K_{6}$, where $
K_{2}=\small{\left(\begin{array}{cc} 
3 & 1 \\
1 & 1 
\end{array} \right)}
$ 
and
$
K_{6}=\small{\left(\begin{array}{cc} 
3 & 0 \\
0 & 2 
\end{array} \right)}
$.
\end{prop}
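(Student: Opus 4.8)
The plan is to prove all three assertions through the period map for cubic fourfolds, converting the lattice condition $h^{2}\in M\subset A(X)$ into an incidence condition inside the period domain. Since the primitive lattice $L^{0}$ has signature $(20,2)$, the associated period domain
$$
D=\{[\omega]\in \mathbb{P}(L^{0}\otimes \mathbb{C}) : (\omega,\omega)=0,\ (\omega,\bar\omega)<0\}
$$
is a $20$-dimensional Type IV domain carrying an action of an arithmetic group $\Gamma$, and the period map $\tau\colon \mathcal{C}\to D/\Gamma$ is defined. By the integral Hodge conjecture one has $A(X)=\{x\in L : (x,\omega)=0\}$ for the period point $\omega$ of $X$; as every element of $L^{0}$ is orthogonal to $h^{2}$, the requirement $h^{2}\in M\subset A(X)$ is equivalent to $\omega\perp M^{0}$, where $M^{0}:=M\cap L^{0}$. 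First I would record the elementary lattice input: because $M$ is positive definite of rank $r$ and saturated in $L$, the sublattice $M^{0}$ is positive definite, saturated in $L^{0}$, and of rank $r-1$, being the kernel of the nonzero functional $x\mapsto (x,h^{2})$ on $M$.

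Next I would settle the codimension. As $M^{0}$ is positive definite of rank $r-1$, its orthogonal complement $(M^{0})^{\perp}\subset L^{0}$ has signature $(21-r,2)$, so the sub-locus
$$
D_{M^{0}}:=D\cap \mathbb{P}\big((M^{0})^{\perp}\otimes \mathbb{C}\big)
$$
is a nonempty Type IV domain of dimension $21-r$, hence of codimension $r-1$ in $D$. By the global Torelli theorem for cubic fourfolds (Voisin) together with infinitesimal Torelli, $\tau$ is an open immersion; therefore $\mathcal{C}_{M}=\tau^{-1}\big(D_{M^{0}}/\Gamma_{M^{0}}\cap \mathrm{im}(\tau)\big)$ has codimension $r-1$ in $\mathcal{C}$ whenever it is nonempty.

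For the nonemptiness criterion and the existence of $X$ with $A(X)=M$, the decisive input is the description, due to Laza and Looijenga, of $\mathrm{im}(\tau)$ as the complement $\big(D\setminus(\mathcal{H}_{2}\cup\mathcal{H}_{6})\big)/\Gamma$, where $\mathcal{H}_{2}$ and $\mathcal{H}_{6}$ are the Heegner divisors cut out by the primitive vectors corresponding to labellings of discriminant $2$ and $6$; each is a locally finite union of hyperplanes $D_{v}=\{\omega:\omega\perp v\}$. Thus $\mathcal{C}_{M}$ is nonempty if and only if $D_{M^{0}}\not\subset\mathcal{H}_{2}\cup\mathcal{H}_{6}$. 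Since $D_{M^{0}}$ is connected and $\mathcal{H}_{2}\cup\mathcal{H}_{6}$ is a countable union of proper closed analytic subsets, a Baire-category argument shows that containment forces $D_{M^{0}}\subset D_{v}$ for a single such $v$; and because $D_{M^{0}}$ is Zariski dense in the quadric it spans in $\mathbb{P}\big((M^{0})^{\perp}\otimes\mathbb{C}\big)$, this occurs exactly when $v\in M^{0}$. It then remains to match such a $v$ with the forbidden lattices: a vector $v\in M^{0}\subset M$ defining a component of $\mathcal{H}_{2}$ or $\mathcal{H}_{6}$ is precisely one for which the saturation of $\langle h^{2},v\rangle$ is isometric to $K_{2}$ or $K_{6}$, whence $D_{M^{0}}\subset\mathcal{H}_{2}\cup\mathcal{H}_{6}$ if and only if $M$ contains a sublattice $h^{2}\in K\subset M$ with $K=K_{2}$ or $K_{6}$. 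For the ``moreover'' claim, I would observe that extra algebraic classes beyond $M$ appear only over the countably many proper subdomains $D_{(M')^{0}}\subsetneq D_{M^{0}}$ with $M\subsetneq M'$, so a very general $\omega\in D_{M^{0}}$ avoids all of them and the corresponding $X$ satisfies $A(X)=M$.

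The main obstacle is this final matching step: pinning down the exact correspondence between the excluded Heegner divisors and the lattices $K_{2}$, $K_{6}$. It requires the precise Laza–Looijenga determination of which discriminants are removed from the image of the period map, together with a lattice-embedding computation — via Eichler's criterion and Nikulin's theory of discriminant forms — showing that the primitive vectors of the relevant squares are permuted transitively by $\Gamma$ and correspond bijectively, through saturation of $\langle h^{2},v\rangle$, to the two rank-two lattices $K_{2}$ and $K_{6}$. The signature and codimension bookkeeping is routine, but this identification is where the real content lies.
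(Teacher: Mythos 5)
Your proposal is correct, and its core is the same as the paper's own sketch: both arguments rest on the Laza--Looijenga description of the image of the period map and obtain the desired cubic fourfold from a sufficiently general period point orthogonal to $M^{0}$. The differences are in packaging. You derive \emph{both} directions of the nonemptiness criterion uniformly from ``image $=$ complement of the discriminant-$2$ and discriminant-$6$ Heegner divisors'' together with a Baire-category argument, whereas the paper handles the ``only if'' direction by citing the geometric inputs directly (Voisin's theorem that $A(X)\cap (h^{2})^{\perp}$ contains no class of square $2$, and Hassett's Theorem 4.4.1 to exclude $K_{2}$), and compresses your Baire/very-general-point argument into the phrase ``by a standard argument one can find $\omega$ with $L\cap\omega^{\perp}=M$.'' You also make explicit two things the paper leaves entirely to its citation of Hassett's Proposition 12: the codimension count via Voisin's Torelli theorem, and the selection of a very general $\omega\in D_{M^{0}}$ so that $A(X)=M$ exactly. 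One substantive comment: the step you single out as the ``main obstacle'' is lighter than you suggest. Laza and Looijenga state the removed locus precisely as the locus of periods admitting labellings of discriminant $2$ or $6$ in Hassett's sense, so the matching with $K_{2}$ and $K_{6}$ needs only Hassett's classification of rank-two labelling lattices of a given discriminant (a $2\times 2$ Gram-matrix computation of exactly the kind carried out in Lemma~\ref{key-observation}), plus the observation that saturatedness of $M$ in $L$ forces the saturation of $\langle h^{2},v\rangle$ to lie inside $M$; Eichler's criterion and Nikulin's discriminant-form theory are not required. Two cosmetic corrections: identifying $A(X)$ with $L\cap\omega^{\perp}$ uses only $H^{4,0}(X)=0$, not the integral Hodge conjecture (the latter is needed only to call these classes algebraic); and $D$ and $D_{M^{0}}$ each have two connected components swapped by complex conjugation, so the irreducibility/Baire argument should be run on one component and transported to the other using conjugation-invariance of the hyperplanes $D_{v}$.
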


This proposition is crucial for our purpose,
so we sketch a proof for the convenience of readers.

\begin{proof}[Sketch of proof]
Suppose $\mathcal{C}_{M}$ is nonempty.
If $h^{2}\in K_{6}\subset M$ is a sublattice,
then there is a smooth cubic fourfold $X$ 
such that $A(X)\cap \langle h^{2}\rangle^{\perp}$ contains an element $r$ with $(r.r)=2$ and this contradicts Voisin \cite[Section 4, Proposition 1]{Voi85}; 
furthermore, Hassett \cite[Theorem 4.4.1]{Has00} excludes the case when $h^{2}\in K_{2}\subset M$ is a sublattice. 

Conversely, suppose that there exists no rank two sublattice $h^{2}\in K\subset M$ with $K=K_{2}$ or $K_{6}$. Since the signature of $L$ is $(21, 2)$ and $M\subset L$ is positive definite, 
by a standard argument, 
one can always find $\omega\in L\otimes_{\Z} \mathbb{C}$ such that
$$
\omega^{2}=0,\;\;  (\omega.\bar{\omega})<0\;\; \textrm{and} \;\; L\cap \omega^{\perp}=M.
$$
According to the description of the image of the period map for cubic fourfolds (Laza \cite[Theorem 1.1]{Laz10} and Looijenga \cite[Theorem 4.1]{Loo09}), 
one has a smooth cubic fourfold $X$ 
and a complete marking $\phi: H^{4}(X, \Z)\stackrel{\simeq}\longrightarrow L$ map the square of the hyperplane class to $h^{2}\in L$ and a generator of $H^{3,1}(X)$ to $\omega$.
Thus $M=A(X)$ and hence $\mathcal{C}_{M}$ contains $X$ and nonempty.
\end{proof}

In the rest of the context, 
we will frequently use the following lemma to check the nonemptyness condition in the Proposition \ref{crucial}.

\begin{lem}\label{key-observation}
Let $h^{2}\in M\subset L$ be a positive definite saturated sublattice.
Then the following three conditions are equivalent:
\begin{enumerate}
\item[(i)] there exists no sublattice $h^{2}\in K\subset M$ with $K=K_{2}$ or $K_{6}$;
\item[(ii)] there exists no $r\in M$ such that $(r.r)=2$ (i.e., $M$ does not represent $2$);
\item[(iii)] for any $0\neq x\in M$, $(x.x)\geq 3$.
\end{enumerate}
In particular, if $M$ satisfies one of the three equivalent conditions, 
then $\emptyset\neq \mathcal{C}_{M}\subset \mathcal{C}_{K}$ 
for any saturated sublattice $h^{2}\in K\subset M$.
\end{lem}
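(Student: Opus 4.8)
The plan is to prove the three conditions equivalent by running the cycle $(iii)\Rightarrow(ii)\Rightarrow(i)\Rightarrow(iii)$, the engine of the whole argument being a single parity observation: for every $x\in L$ one has $(x.x)\equiv(h^2.x)\pmod 2$. This holds because $L=E_8^{\oplus2}\oplus U^{\oplus2}\oplus I_{3,0}$ with the first two summands even and orthogonal to $I_{3,0}$, while $h^2=(1,1,1)\in I_{3,0}$; writing $x=y+z$ with $y$ in the even part and $z=(z_1,z_2,z_3)\in I_{3,0}$ gives $(x.x)\equiv z_1^2+z_2^2+z_3^2\equiv z_1+z_2+z_3=(h^2.x)\pmod 2$. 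Before starting I would record that $(h^2.h^2)=3$ and that $h^2$ is primitive in $M$ (it is primitive in $L$ and $M$ is saturated).

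The implication $(iii)\Rightarrow(ii)$ is immediate. For $(ii)\Rightarrow(i)$ I argue the contrapositive $\neg(i)\Rightarrow\neg(ii)$: both $K_2=\langle h^2,v\rangle$ and $K_6=\langle h^2,w\rangle$ contain a vector of norm $2$ — namely $h^2-v$ (which has norm $3-2+1=2$) and the second generator $w$ — so if $M$ contains a copy of $K_2$ or $K_6$ through $h^2$, then $M$ represents $2$.

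The substantive direction is $(i)\Rightarrow(iii)$, again by contrapositive. Suppose some $0\neq x\in M$ has $(x.x)\le2$. Then $x\notin\mathbb{Q}h^2$ (otherwise $x\in\mathbb{Z}h^2$ by saturation, forcing $(x.x)\in3\mathbb{Z}$), so $K=\langle h^2,x\rangle$ has rank two, and positive definiteness together with the Cauchy--Schwarz inequality forces $(h^2.x)^2<(h^2.h^2)(x.x)=3(x.x)$. If $(x.x)=1$, then $(h^2.x)^2<3$ and parity makes $(h^2.x)$ odd, hence $(h^2.x)=\pm1$; after replacing $x$ by $-x$ the Gram matrix of $(h^2,x)$ is $K_2$. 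If $(x.x)=2$, then $(h^2.x)^2<6$ and parity makes $(h^2.x)$ even, so $(h^2.x)\in\{0,\pm2\}$: the value $0$ yields $K=K_6$, while $\pm2$ yields $K\cong K_2$ after the base change $x\mapsto h^2\mp x$ (giving $(h^2.h^2-x)=1$, $(h^2-x.h^2-x)=1$). In every case $M$ contains $K_2$ or $K_6$, so $(i)$ fails. The step I expect to be the main obstacle is precisely the exclusion of the a priori possible ``discriminant $5$'' Gram matrices $\left(\begin{smallmatrix}3&\pm1\\\pm1&2\end{smallmatrix}\right)$, which are neither $K_2$ nor $K_6$; these are ruled out not by any lattice classification but exactly by the mod-$2$ congruence above, which is why the parity observation is the crux of the lemma.

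Finally, for the last assertion, note that condition $(i)$ is verbatim the nonemptiness criterion of Proposition \ref{crucial}, so $\mathcal{C}_M\neq\emptyset$. For a saturated sublattice $h^2\in K\subset M$ (saturated in $M$, hence in $L$ by transitivity of saturation), any $X\in\mathcal{C}_M$ satisfies $h^2\in K\subset M\subset A(X)\subset L$, whence $X\in\mathcal{C}_K$ directly from the definition of $\mathcal{C}_K$; therefore $\mathcal{C}_M\subset\mathcal{C}_K$.
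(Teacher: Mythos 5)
Your proof is correct and is essentially the paper's own argument: both proofs reduce to the Gram matrix of $K=\langle h^{2},x\rangle$ for a vector $x$ of norm $1$ or $2$, bound the off-diagonal entry $(h^{2}.x)$ by positive definiteness, and eliminate the surviving ``discriminant $5$'' cases by a parity argument --- your congruence $(x.x)\equiv (h^{2}.x)\pmod{2}$ (i.e.\ $h^{2}$ is characteristic in $L$) is the same mod-$2$ fact that the paper extracts from the evenness of $L^{0}=(h^{2})^{\perp}$, applied there to the vectors $h^{2}-3r$ and $r$. The only differences are organizational: you run the cycle $(iii)\Rightarrow(ii)\Rightarrow(i)\Rightarrow(iii)$ where the paper proves $(i)\Leftrightarrow(ii)$ and $(ii)\Leftrightarrow(iii)$ separately, and you spell out the final ``in particular'' clause (nonemptiness via Proposition \ref{crucial} plus transitivity of saturation), which the paper leaves implicit.
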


\begin{proof}

First of all,  $(ii) \Rightarrow (i)$ is clear since both $K_{2}$ and $K_{6}$ represent $2$.

Secondly, $(i) \Rightarrow (ii)$.
Suppose that there exists $r\in M$ such that $(r.r)=2$. 
We denote by $K\subset M$ the sublattice generated by $h^{2}$ and $r$.
Hence, the Gram matrix of $K$ with respect to the basis $(h, r)$ is
$$
\left(\begin{array}{cc} 
(h^{2}.h^{2}) & (h^{2}.r) \\
(h^{2}.r) & (r.r) 
\end{array} \right)
=
\left(\begin{array}{cc} 
3 & a \\
a & 2 
\end{array} \right).
$$
Replacing $r$ by $-r$ if necessary, we may and will assume $a\ge 0$.
Since $K$ is positive definite, we have $a^{2}<6$ and thus $a=0, 1, 2$. If $a=0$ (resp. $2$) , then $K$ is isometric to $K_{6}$ (resp. $K_{2}$), contradiction. If $a=1$, then $h^2-3r\in (h^{2})^{\perp}=L^0$ and $((h^2-3r).(h^2-3r))=15$, an odd number, contradicting to the fact $L^0$ is even.  

Finally, clearly $(iii)$ implies $(ii)$.
Conversely, we show $(ii)$ implies $(iii)$. 
By hypothesis, we may assume that there is $r\in M$ with $(r.r)=1$. 
Then let $K\subset M$ be the sublattice generated by $h^2$ and $r$. Hence, the Gram matrix of $K$ with respect to the basis $(h, r)$ is
$$
\left(\begin{array}{cc} 
3 & a \\
a & 1 
\end{array} \right)
$$
where $a=(h^{2}.r)$. Replacing $r$ by $-r$ if necessary, we may and will assume $a\ge 0$.
Since $K$ is positive definite, we have $a^{2}<3$ and thus $a=0, 1$. If $a=0$, then $r\in (h^{2})^{\perp}=L^0$ and $(r.r)=1$, an odd number, contradicting to the fact $L^0$ is even.  If $a=1$, then $K$ is isometric to $K_2$ and $K$ represents $2$, contradiction.
\end{proof}


\section{Intersections of Hassett divisors}

In this section, 
we prove Theorem \ref{mainthm} (=Theorem \ref{mainthm1})
and discuss some related results (Theorem \ref{mainprop} and Theorem \ref{mainthm2}).

Firstly, we setup some notations for latter use. Let

$$
L= E_{8}^{\oplus 2}\oplus U_{1}\oplus U_{2} \oplus I_{3,0},
$$
where $U_{1}$ and $U_{2}$ are two copies of $U$. 
The standard basis of $U$ consists of isotropic vectors $e,f$ with $(e.f)=1$. 
We denote the standard basis of $U_i$ by $e_i,f_i$, $i=1,2$,  
and denote by $h^2$ the element $(1,1,1)\in I_{3,0}\subset L$.

We will use the following theorem, an interesting result for itself, to prove Theorem \ref{mainthm1}.

\begin{thm}\label{mainprop}
Any two Hassett divisors intersect, 
i.e., $\mathcal{C}_{d_1}\cap \mathcal{C}_{d_2} \neq \emptyset$ 
for any two integers $d_{1}$ and $d_{2}$ satisfying $(\star)$. 
Moreover, there exists a smooth cubic fourfold $X$ and a codimesion-two subvariety $\mathcal{C}_{A(X)} \subset \mathcal{C}$ such that 
$X\in \mathcal{C}_{A(X)} \subset \mathcal{C}_{d_1}\cap \mathcal{C}_{d_2}$ 
and $A(X)$ is a rank 3 lattice of determinant $\displaystyle \frac{d_1d_2-1}{3}$ if $d_{1}\equiv 2\; (\mathrm{mod}\; 6)$ and $d_{2}\equiv 2\; (\mathrm{mod}\; 6)$; 
$\displaystyle \frac{d_1d_2}{3}$ otherwise.
\end{thm}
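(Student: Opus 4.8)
The plan is to deduce both assertions from a single rank-three lattice via Proposition \ref{crucial} and Lemma \ref{key-observation}. I will construct a positive definite, saturated sublattice $h^2 \in M \subset L$ of rank $3$ which does not represent $2$ and which contains two saturated rank-two sublattices $h^2 \in K_1, K_2 \subset M$ of discriminants $d_1$ and $d_2$. Granting this, Lemma \ref{key-observation} yields $\emptyset \neq \mathcal{C}_M \subseteq \mathcal{C}_{K_1}\cap \mathcal{C}_{K_2}$; since each $K_j$ is saturated in $L$ and the algebraic lattice $A(X)$ of any cubic is saturated in $L$, the lattice $K_j$ is saturated in every $A(X)\supseteq K_j$ and hence a genuine discriminant-$d_j$ labelling of every $X \in \mathcal{C}_{K_j}$, whence $\mathcal{C}_{K_j}\subseteq \mathcal{C}_{d_j}$ and therefore $\mathcal{C}_M \subseteq \mathcal{C}_{d_1}\cap\mathcal{C}_{d_2}$. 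Proposition \ref{crucial} then shows that $\mathcal{C}_M$ is a nonempty codimension-two subvariety containing some $X$ with $A(X) = M$, and a determinant computation will identify $\det M$ with the stated value. This simultaneously proves $\mathcal{C}_{d_1}\cap\mathcal{C}_{d_2}\neq\emptyset$ and produces the desired $\mathcal{C}_{A(X)} = \mathcal{C}_M$.

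To produce $M$ I will work inside the fixed model $L = E_8^{\oplus 2}\oplus U_1\oplus U_2\oplus I_{3,0}$, writing $\epsilon_1,\epsilon_2,\epsilon_3$ for an orthonormal basis of $I_{3,0}$, so $h^2 = \epsilon_1+\epsilon_2+\epsilon_3$, and $e_j, f_j$ for the standard basis of $U_j$. For $j\in\{1,2\}$ I set
$$
v_j = \begin{cases} \epsilon_j + e_j + \tfrac{d_j-2}{6}\,f_j, & d_j \equiv 2 \ (\mathrm{mod}\ 6),\\[2pt] e_j + \tfrac{d_j}{6}\,f_j, & d_j \equiv 0\ (\mathrm{mod}\ 6),\end{cases}
$$
the coefficients being non-negative integers because $d_j$ satisfies $(\star)$. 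A direct computation gives $(h^2.v_j) = a_j$ and $(v_j.v_j) = b_j$ with $3 b_j - a_j^2 = d_j$, where $a_j = 1$ in the first case and $a_j = 0$ in the second; thus $K_j := \langle h^2, v_j\rangle$ has discriminant $d_j$. Because $v_1$ and $v_2$ use the disjoint summands $U_1, U_2$ and the distinct vectors $\epsilon_1, \epsilon_2$, they are orthogonal: $(v_1.v_2) = 0$. Setting $M := \langle h^2, v_1, v_2\rangle$, the coefficients of any $\alpha h^2 + \beta v_1 + \gamma v_2 \in L$ are recovered integrally from the $e_1$-, $e_2$- and one of the $\epsilon_i$-coordinates, so $M$, and likewise each $K_j$, is saturated in $L$.

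It remains to verify positive definiteness, compute the determinant, and check that $M$ does not represent $2$. In the basis $(h^2, v_1, v_2)$ the Gram matrix of $M$ is $\left(\begin{smallmatrix} 3 & a_1 & a_2\\ a_1 & b_1 & 0\\ a_2 & 0 & b_2\end{smallmatrix}\right)$; its leading principal minors are $3$, $d_1$, and $\det M = \tfrac{1}{3}\big(d_1 d_2 - a_1^2 a_2^2\big)$, all positive, so $M$ is positive definite, and since $a_j^2 = a_j\in\{0,1\}$ this determinant is $\tfrac{d_1 d_2 - 1}{3}$ precisely when both $d_j\equiv 2\ (\mathrm{mod}\ 6)$ and $\tfrac{d_1 d_2}{3}$ otherwise, matching the claim. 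For the non-representation of $2$ I will write $x = \alpha h^2 + \beta v_1 + \gamma v_2$ using the orthogonal projections $v_j^0 = v_j - \tfrac{a_j}{3} h^2$, giving $(x.x) = 3\mu^2 + \tfrac{1}{3}\big(d_1\beta^2 + d_2\gamma^2 - 2 a_1 a_2\beta\gamma\big)$ for a suitable $\mu\in\mathbb{Q}$. If $\beta = \gamma = 0$ then $(x.x) = 3\alpha^2$, which is $0$ or $\geq 3$; if $(\beta,\gamma)\neq(0,0)$, then using $d_j \geq 8$ and $2|\beta\gamma|\leq \beta^2+\gamma^2$ the bracket is at least $7(\beta^2+\gamma^2)\geq 7$, so $(x.x) > 2$. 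As $(x.x)$ is an integer, $M$ represents no $2$, which is condition (ii) of Lemma \ref{key-observation}.

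The one genuinely delicate step is this last estimate, because it is exactly what guarantees $\mathcal{C}_M\neq\emptyset$; it comes out cleanly only thanks to the choice $(v_1.v_2)=0$, which decouples the quadratic form. The remaining work is bookkeeping across the three congruence cases for $(d_1, d_2)$ modulo $6$ — uniformly tracked through $a_j$, $b_j$ and $\det M$ — together with the short saturation and determinant computations; I expect no further conceptual difficulty.
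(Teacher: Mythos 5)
Your proof is correct and follows essentially the same route as the paper: you build the same rank-three lattice $M=\langle h^2,v_1,v_2\rangle$ (your $v_j$ agree with the paper's $\alpha_j$ or $\alpha_j+(0,1,0)$, $\alpha_j+(0,0,1)$ up to permuting the $I_{3,0}$-coordinates), check saturation, positive definiteness and the determinant, and invoke Lemma \ref{key-observation} and Proposition \ref{crucial}. The only cosmetic differences are that you treat the three congruence classes uniformly via the parameters $a_j,b_j$ instead of three separate cases, and you verify that $M$ does not represent $2$ by orthogonal projection onto $(h^2)^{\perp}$ rather than by completing squares.
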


\begin{proof}

By definition, an integer $d$ satisfies $(\star)$ if $d>6$ and $d\equiv 0, 2\; (\mathrm{mod}\; 6)$.
Therefore, the proof is divided into three cases:

{\bf Case $(1)$: $d_{1}\equiv 0\; (\mathrm{mod}\; 6)$ and $d_{2}\equiv 0\; (\mathrm{mod}\; 6)$.}  
Suppose $d_{1}=6n_{1}$, $d_{2}=6n_{2}$ and $n_{1}, n_{2}\geq 2$.
We consider the rank $3$ lattice 
$$
h^{2}\in M:=\langle h^{2}, \alpha_{1}, \alpha_{2} \rangle \subset L
$$ 
generated by $h^{2}$, $\alpha_{1}:=e_{1}+n_{1}f_{1}$ and $\alpha_{2}:=e_{2}+n_{2}f_{2}$.
Then the Gram matrix of $M$ with respect to the basis $(h^2,\alpha_1,\alpha_2)$ is
$$
\left(\begin{array}{ccc} 
3 & 0 & 0 \\
0  & 2n_{1}  & 0 \\
0  & 0 & 2n_{2}
\end{array} \right).
$$
Therefore, $h^{2}\in M\subset L$ is positive definite saturated sublattice.
In addition, for any nonzero $v=xh^{2}+y\alpha_{1}+z\alpha_{2}\in M$, where $x,y,z$ are integers,
we have 
$$
(v.v)=3x^{2}+2n_{1}y^{2}+2n_{2}z^{2} \geq 3
$$
since $n_{1}, n_{2}\geq 2$ and at least one of the integers $x, y, z$ is nonzero. 
Hence, the embedding $h^2\in M \subset L$ satisfies the conditions of Lemma \ref{key-observation}. Thus, by Proposition \ref{crucial}, $\mathcal{C}_{M}\subset \mathcal{C}$ is nonempty and has codimension $2$, and there exists $X\in  \mathcal{C}_{M}$ with $A(X)=M$. Thus $A(X)$ is a rank $3$ lattice of $\displaystyle \det(A(X))=\frac{d_1d_2}{3}$. 
Moreover, we consider the sublattices
$$
h^{2}\in K_{d_{1}}:=\langle h^{2}, \alpha_{1}\rangle \subset M  
$$
with discriminant $d_{1}$, and 
$$
h^{2}\in K_{d_{2}}:=\langle h^{2},  \alpha_{2} \rangle \subset M  
$$
with discriminant $d_{2}$. Clearly, both $K_{d_{1}}$ and $K_{d_{2}}$ are saturated sublattices of $M$.
Applying Lemma \ref{key-observation}  and Proposition \ref{crucial} again, 
we obtain $X\in \mathcal{C}_{M}\subset \mathcal{C}_{K_{d_{1}}} = \mathcal{C}_{d_{1}}$ 
and $X\in \mathcal{C}_{M}\subset \mathcal{C}_{K_{d_{2}}} = \mathcal{C}_{d_{2}}$. 
Consequently, 
$X\in \mathcal{C}_{M}\subset \mathcal{C}_{d_{1}} \cap \mathcal{C}_{d_{2}}$ is what we want.

{\bf Case $(2)$: $d_{1}\equiv 0\; (\mathrm{mod}\; 6)$ and $d_{2}\equiv 2\; (\mathrm{mod}\; 6)$.}
Given $d_{1}=6n_{1}$ and $d_{2}=6n_{2}+2$ with $n_{1}\geq 2, n_{2}\geq 1$.
We consider the rank $3$ lattice 
$$
h^{2}\in M:=\langle h^{2}, \alpha_{1}, \alpha_{2}+(0, 0, 1) \rangle \subset L
$$ 
where $(0, 0, 1)\in I_{3, 0}$.
Then the Gram matrix of $M$ with respect to the basis $( h^{2}, \alpha_{1}, \alpha_{2}+(0, 0, 1))$ is
$$
\left(\begin{array}{ccc} 
3 & 0 & 1 \\
0  & 2n_{1}  & 0 \\
1  & 0 & 2n_{2}+1
\end{array} \right)
$$
Thus, $h^{2}\in M\subset L$ is positive definite saturated sublattice.
Futhermore, for any nonzero $v=xh^{2}+y\alpha_{1}+z(\alpha_{2}+(0,0,1))\in M$,
we get
$$
(v.v)=2x^{2}+2n_{1}y^{2}+2n_{2}z^{2}+(x+z)^{2}  \geq 3
$$
since $n_{1}\geq 2$,  $n_{2}\geq 1$ and at least one of the integers $x, y, z$ is nonzero. 
Hence, by Lemma \ref{key-observation} and Proposition \ref{crucial},  we conclude that $\mathcal{C}_{M}\subset \mathcal{C}$ is nonempty and has codimension $2$, and there exists $X\in  \mathcal{C}_{M}$ with $A(X)=M$. 
Thus $A(X)$ is a rank $3$ lattice of $\displaystyle \det(A(X))=\frac{d_1d_2}{3}$. 
Similarly, we consider the sublattices:
$$
h^{2}\in K_{d_{1}}:=\langle h^{2}, \alpha_{1}\rangle \subset M  
$$
of discriminant $d_{1}$, and
$$
h^{2}\in K_{d_{2}}:=\langle h^{2},  \alpha_{2} +(0, 0, 1)\rangle \subset M  
$$
of discriminant $d_{2}$.
Again Lemma \ref{key-observation} and Proposition \ref{crucial} imply $X\in \mathcal{C}_{M}\subset \mathcal{C}_{K_{d_{1}}} = \mathcal{C}_{d_{1}}$ 
and $X\in \mathcal{C}_{M}\subset \mathcal{C}_{K_{d_{2}}} = \mathcal{C}_{d_{2}}$. 
Consequently, 
$X\in \mathcal{C}_{M}\subset \mathcal{C}_{d_{1}} \cap \mathcal{C}_{d_{2}}$ 
is what we wanted.

{\bf Case $(3)$: $d_{1}\equiv 2\; (\mathrm{mod}\; 6)$ and $d_{2}\equiv 2\; (\mathrm{mod}\; 6)$.}
Assume $d_{1}=6n_{1}+2$ and $d_{2}=6n_{2}+2$ with $n_{1}, n_{2}\geq 1$.
We consider the rank $3$ lattice 
$$
h^{2}\in M:=\langle h^{2}, \alpha_{1}+(0,1,0), \alpha_{2}+(0,0,1) \rangle \subset L
$$ 
here $(0,1,0)\in I_{3,0}$.
Then the Gram matrix of $M$ with respect the basis is
$$
\left(\begin{array}{ccc} 
3 & 1 & 1 \\
1  & 2n_{1}+1  & 0 \\
1 & 0 & 2n_{2}+1
\end{array} \right)
$$
Thus, $h^{2}\in M\subset L$ is positive definite saturated sublattice.
For any nonzero $v=xh^{2}+y(\alpha_{1}+(0,1,0))+z(\alpha_{2}+(0,0,1))\in M$,
we obtain
$$
(v.v)=x^{2}+2n_{1}y^{2}+2n_{2}z^{2}+(x+y)^{2}+(x+z)^{2} \geq 3
$$
since $n_{1}, n_{2}\geq 1$ and at least one of the integers $x, y, z$ is nonzero. 
Hence, Lemma \ref{key-observation} and Proposition \ref{crucial} concludes that $\mathcal{C}_{M}\subset \mathcal{C}$ is nonempty and has codimension $2$, and there exists $X\in  \mathcal{C}_{M}$ with $A(X)=M$. 
Thus $A(X)$ is a rank $3$ lattice of $\displaystyle \det(A(X))=\frac{d_1d_2-1}{3}$. 
Moreover, we consider 
$$
h^{2}\in K_{d_{1}}:=\langle h^{2}, \alpha_{1}+(0,1, 0)\rangle \subset M  
$$
with discriminant $d_{1}$ and
$$
h^{2}\in K_{d_{2}}:=\langle h^{2},  \alpha_{2} +(0, 0, 1)\rangle \subset M  
$$
with discriminant $d_{1}$.
By Lemma \ref{key-observation} and Proposition \ref{crucial}, 
we obtain $X\in \mathcal{C}_{M}\subset \mathcal{C}_{K_{d_{1}}} = \mathcal{C}_{d_{1}}$ 
and $X\in \mathcal{C}_{M}\subset \mathcal{C}_{K_{d_{2}}} = \mathcal{C}_{d_{2}}$. 
As a consequence, 
$X\in \mathcal{C}_{M}\subset \mathcal{C}_{d_{1}} \cap \mathcal{C}_{d_{2}}$ 
is what we wanted.
This finishs the proof of Theorem \ref{mainprop}.
\end{proof}

\begin{rem}
Note that it has been known $\mathcal{C}_{8}\cap \mathcal{C}_{14}\neq \emptyset$ for $20$ years (Hassett \cite{Has99}), and $\mathcal{C}_{8}$ intersects every Hassett divisor (Addington-Thomas \cite[Theorem 4.1]{AT14}).
It is also shown that $\mathcal{C}_{8}\cap \mathcal{C}_{14}$ has five irreducible components (\cite{ABBVA14, BRS19}).
Moreover, \cite[page 166, paragraph 4 line 2 ]{BRS19} has mentioned that $\mathcal{C}_{14}$ intersects many other divisors $\mathcal{C}_{d}$, 
however, it is not obvious to see that which Hassett divisors intersect with $\mathcal{C}_{14}$. 
\end{rem}

Consequently, Theorem \ref{mainprop} not only generalizes \cite[Theorem 4.1]{AT14} but also implies that $\mathcal{C}_{14}$ intersects all Hassett divisors. 
Because of the same reason, we may conclude the main result of the current paper.

\begin{thm}[=Theorem \ref{mainthm}]\label{mainthm1}
Every Hassett divisor $\mathcal{C}_{d}$ contains a union of three codimension-two subvarieties in $\mathcal{C}$ parametrizing rational cubic fourfolds.
\end{thm}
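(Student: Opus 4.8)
The plan is to deduce Theorem \ref{mainthm1} from Theorem \ref{mainprop} by pairing the given divisor $\mathcal{C}_d$ with each of the three Hassett divisors $\mathcal{C}_{14}$, $\mathcal{C}_{26}$, $\mathcal{C}_{38}$, which are exactly the admissible values whose members are all known to be rational. Fix $d$ satisfying $(\star)$, and for each $d'\in\{14,26,38\}$ note that $d'$ also satisfies $(\star)$ (indeed all three are admissible and $\equiv 2\pmod 6$). Applying Theorem \ref{mainprop} to the pair $(d,d')$ produces a smooth cubic fourfold and a nonempty codimension-two subvariety $\mathcal{C}_{M_{d'}}\subset\mathcal{C}$ with
$$\mathcal{C}_{M_{d'}}\subset \mathcal{C}_d\cap\mathcal{C}_{d'},$$
where $M_{d'}=A(X)$ is a rank-three lattice whose determinant is the one recorded in Theorem \ref{mainprop}.

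Since $\mathcal{C}_{M_{d'}}\subset\mathcal{C}_{d'}$ and every cubic fourfold lying in $\mathcal{C}_{14}$, $\mathcal{C}_{26}$ or $\mathcal{C}_{38}$ is rational by the results cited in the introduction, every member of $\mathcal{C}_{M_{d'}}$ is rational. Hence each of $\mathcal{C}_{M_{14}}$, $\mathcal{C}_{M_{26}}$, $\mathcal{C}_{M_{38}}$ is a codimension-two subvariety of $\mathcal{C}$ parametrizing rational cubic fourfolds, and their union is contained in $\mathcal{C}_d$. It then remains only to check that these three subvarieties are pairwise distinct, so that the union is genuinely a union of three codimension-two subvarieties rather than possibly fewer.

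This distinctness is where the only content beyond invoking Theorem \ref{mainprop} lies, and I would read it off from the determinant formula. Because $14,26,38\equiv 2\pmod 6$, Theorem \ref{mainprop} gives $\det M_{d'}=dd'/3$ when $d\equiv 0\pmod 6$ and $\det M_{d'}=(dd'-1)/3$ when $d\equiv 2\pmod 6$; in either case the three values obtained for $d'=14,26,38$ are pairwise distinct, since their differences are nonzero multiples of $d$. To pass from distinct determinants to distinct subvarieties, recall that by Proposition \ref{crucial} a general member $X\in\mathcal{C}_{M_{d'}}$ has $A(X)=M_{d'}$ exactly. If two of the subvarieties coincided, such a general $X$ would lie in both, so that one of the two rank-three saturated lattices $M_{d'}$ would embed into the other; as the smaller lattice is saturated in $L$ it is saturated in the larger, and equal rank then forces the two to be equal. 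Lattices of different determinant cannot be isometric, a contradiction.

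Thus the three subvarieties are \emph{distinct}, and their union supplies the desired three codimension-two families of rational cubic fourfolds inside $\mathcal{C}_d$. I expect the distinctness verification to be the only real obstacle: the nonemptiness, codimension and rationality assertions are immediate from Theorem \ref{mainprop} together with the known rationality of $\mathcal{C}_{14}$, $\mathcal{C}_{26}$ and $\mathcal{C}_{38}$, whereas ruling out coincidences among the $\mathcal{C}_{M_{d'}}$ is precisely what the determinant computation secures.
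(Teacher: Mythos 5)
Your proposal is correct and follows essentially the same route as the paper: apply Theorem \ref{mainprop} to the pairs $(d,14)$, $(d,26)$, $(d,38)$, deduce rationality from the known rationality of all members of $\mathcal{C}_{14}$, $\mathcal{C}_{26}$, $\mathcal{C}_{38}$, and establish distinctness of the three codimension-two loci from the fact that the three rank-three lattices have pairwise distinct determinants, so neither can admit a saturated embedding into another. The paper's own distinctness argument is the same determinant comparison, phrased via the nonexistence of saturated embeddings $A(X_i)\subset A(X_j)$ for $i\neq j$.
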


\begin{proof}
Applying Theorem \ref{mainprop} to the pairs of integers $(d_1,d_2)=(d,14)$, $(d,26)$, $(d,38)$.
Then there exist three smooth cubic fourfolds $X_{1}$, $X_{2}$ and $X_{3}$
such that
$$
X_{1}\in \mathcal{C}_{A(X_{1})}\subset \mathcal{C}_{d}\cap \mathcal{C}_{14} \subset \mathcal{C}_{d},
$$
$$
X_{2}\in \mathcal{C}_{A(X_{2})}\subset \mathcal{C}_{d}\cap \mathcal{C}_{26} \subset \mathcal{C}_{d},
$$
$$
X_{3}\in \mathcal{C}_{A(X_{3})}\subset \mathcal{C}_{d}\cap \mathcal{C}_{38} \subset \mathcal{C}_{d},
$$
where 
$\mathcal{C}_{A(X_{1})}$, $\mathcal{C}_{A(X_{2})}$, and $\mathcal{C}_{A(X_{3})}$ 
are codimension-two subvarieties of $\mathcal{C}$.
Here $A(X_{1})$, $A(X_{2})$ and $A(X_{3})$ are three different rank $3$ lattices of determinants:
\begin{itemize}
\item if $d\equiv 0\; (\mathrm{mod}\; 6)$, 
         then $\displaystyle \det(A(X_{1}))=\frac{14d}{3}$, 
         $\displaystyle \det(A(X_{2}))=\frac{26d}{3}$ 
         and $\displaystyle \det(A(X_{3}))=\frac{38d}{3}$;
\item if $d\equiv 2\; (\mathrm{mod}\; 6)$, 
         then $\displaystyle \det(A(X_{1}))=\frac{14d-1}{3}$, 
         $\displaystyle \det(A(X_{2}))=\frac{26d-1}{3}$ 
         and $\displaystyle \det(A(X_{3}))=\frac{38d-1}{3}$.
\end{itemize}
By definition of $\mathcal{C}_{A(X_{i})}$ (see Proposition \ref{crucial}),
a smooth cubic fourfold $X\in \mathcal{C}_{A(X_{i})}$
only if there exists a saturated embedding $A(X_{i})\subset A(X)$.
Since $A(X_{1})$, $A(X_{2})$ and $A(X_{3})$ are rank $3$ lattices of different determinants, it follows that there is no saturated embedding $A(X_{i})\subset A(X_{j})$ if $i\neq j$.
Therefore,
$X_{i} \notin\mathcal{C}_{A(X_{j})}$ if $i\neq j$ and 
$\mathcal{C}_{A(X_{1})}$, $\mathcal{C}_{A(X_{2})}$, and $\mathcal{C}_{A(X_{3})}$ are three different codimension-two subvarieties.

Moreover, since every smooth cubic fourfold in $\mathcal{C}_{14}$, $\mathcal{C}_{26}$ and $\mathcal{C}_{38}$ is rational (\cite{BRS19, RS19}),
so every smooth cubic fourfold in $\mathcal{C}_{A(X_{1})}$,  
$\mathcal{C}_{A(X_{2})}$ and $\mathcal{C}_{A(X_{3})}$ is rational. 
Therefore, $\mathcal{C}_{A(X_{1})}$,  
$\mathcal{C}_{A(X_{2})}$ and $\mathcal{C}_{A(X_{3})}$ are three different codimension-two subvarieties which parametrize rational cubic fourfolds. The completes the proof of Theorem \ref{mainthm1}.
\end{proof}

Our main result also motivates the following natural question:

\begin{quest}\label{quset-conj}
Does every Hassett divisor $\mathcal{C}_{d}$ contains a union of countably infinite codimension-two subvarieties parametrizes rational cubic fourfolds for non-admissible value $d$? 
\end{quest}

The answer to Question \ref{quset-conj} has already known for $\mathcal{C}_{8}$ and $\mathcal{C}_{18}$ (\cite{Has99, AHTVA16}).

\begin{cor}
The answer to Question \ref{quset-conj} is yes 
if Conjecture \ref{mainconj} holds.
\end{cor}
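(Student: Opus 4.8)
The plan is to run the construction of Theorem \ref{mainprop} against the \emph{entire} infinite family of admissible values, rather than only the three values $14,26,38$ used in Theorem \ref{mainthm1}, and then invoke Conjecture \ref{mainconj} to convert the membership in admissible divisors into rationality. First I would record that the set of admissible values is countably infinite: for example, every integer of the form $d'=2p$ with $p$ a prime satisfying $p\equiv 1\;(\mathrm{mod}\;6)$ is admissible (one checks $d'\equiv 2\;(\mathrm{mod}\;6)$, $4\nmid d'$, $9\nmid d'$, and no odd prime $q\equiv 2\;(\mathrm{mod}\;3)$ divides $d'$), and there are infinitely many such primes by Dirichlet's theorem. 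List these admissible values as $d'_{1}<d'_{2}<\cdots$; note $d'_{1}=14$, $d'_{2}=26$, $d'_{3}=38$.

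Next, fix a non-admissible Hassett divisor $\mathcal{C}_{d}$. For each $k$ I would apply Theorem \ref{mainprop} to the pair $(d,d'_{k})$, obtaining a smooth cubic fourfold $X_{k}$ and a codimension-two subvariety $\mathcal{C}_{A(X_{k})}\subset \mathcal{C}_{d}\cap\mathcal{C}_{d'_{k}}$ with $A(X_{k})$ a rank-three lattice of determinant $dd'_{k}/3$ or $(dd'_{k}-1)/3$, according to the residues of $d$ and $d'_{k}$ modulo $6$. To guarantee these are pairwise distinct subvarieties, I would compare determinants exactly as in the proof of Theorem \ref{mainthm1}: within any fixed residue class of $d'_{k}$ the determinant is strictly monotonic in $d'_{k}$, and a cross-class collision $dd'_{i}/3=(dd'_{j}-1)/3$ would force $d(d'_{j}-d'_{i})=1$, impossible since $d>6$. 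Hence all the determinants are distinct, and since membership in $\mathcal{C}_{A(X_{k})}$ requires a saturated embedding of $A(X_{k})$ into $A(X)$, the subvarieties $\mathcal{C}_{A(X_{k})}$ form a countably infinite family of pairwise distinct codimension-two subvarieties of $\mathcal{C}$.

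The rationality input is then supplied entirely by the hypothesis. Assuming Conjecture \ref{mainconj}, every smooth cubic fourfold lying in $\mathcal{C}_{d'}$ for an admissible $d'$ is rational. Since $\mathcal{C}_{A(X_{k})}\subset\mathcal{C}_{d'_{k}}$ with each $d'_{k}$ admissible, every cubic fourfold parametrized by $\mathcal{C}_{A(X_{k})}$ is rational. This exhibits inside $\mathcal{C}_{d}$ the desired countably infinite union $\bigcup_{k}\mathcal{C}_{A(X_{k})}$ of codimension-two subvarieties parametrizing rational cubic fourfolds, answering Question \ref{quset-conj} affirmatively under Conjecture \ref{mainconj}.

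I do not expect a genuine obstacle here: the corollary is essentially an iteration of the already-established Theorem \ref{mainprop} across infinitely many admissible targets. The only point requiring care is the bookkeeping that ensures the resulting subvarieties are honestly distinct, and the monotonicity-plus-parity argument on determinants disposes of this cleanly; the infinitude of admissible values is the other routine ingredient, which Dirichlet's theorem secures.
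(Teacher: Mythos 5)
Your argument is correct and is exactly the (implicit) argument the paper intends for this corollary: iterate Theorem \ref{mainprop} over the countably infinite family of admissible values $d'$, distinguish the resulting codimension-two loci $\mathcal{C}_{A(X_k)}\subset\mathcal{C}_d\cap\mathcal{C}_{d'}$ by the determinants of the rank-three lattices as in the proof of Theorem \ref{mainthm1}, and let Conjecture \ref{mainconj} supply the rationality. The paper leaves this unproved as an immediate consequence, and your added bookkeeping (Dirichlet for infinitude of admissible values, the monotonicity and parity check on determinants) is sound and fills in the details correctly.
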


Return to Conjecture \ref{mainconj}, as by-product of Theorem \ref{mainthm1} (=Theorem \ref{mainthm}), 
we have the following. 

\begin{cor}
For every admissible value $d$, 
the Hassett divisor $\mathcal{C}_{d}$ contains a union of three codimension-two subvarieties
in $\mathcal{C}$ parametrizing rational cubic fourfolds.
\end{cor}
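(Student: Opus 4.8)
The plan is to reduce the theorem to an application of Theorem \ref{mainprop} combined with the known rationality of cubic fourfolds in the three Hassett divisors $\mathcal{C}_{14}$, $\mathcal{C}_{26}$, and $\mathcal{C}_{38}$. First I would fix an arbitrary Hassett divisor $\mathcal{C}_d$, so $d$ satisfies $(\star)$, and invoke Theorem \ref{mainprop} separately for each of the three pairs $(d,14)$, $(d,26)$, and $(d,38)$. This produces three smooth cubic fourfolds $X_1, X_2, X_3$ together with rank $3$ lattices $A(X_i)$ such that $X_i \in \mathcal{C}_{A(X_i)} \subset \mathcal{C}_d \cap \mathcal{C}_{14}$, and analogously for $26$ and $38$. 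In particular each $\mathcal{C}_{A(X_i)}$ is a codimension-two subvariety of $\mathcal{C}$ contained in $\mathcal{C}_d$, by the codimension count in Proposition \ref{crucial}.

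The second step is to establish that the three subvarieties $\mathcal{C}_{A(X_1)}$, $\mathcal{C}_{A(X_2)}$, $\mathcal{C}_{A(X_3)}$ are genuinely distinct, which is where the determinant bookkeeping in Theorem \ref{mainprop} does the real work. I would read off the determinants from the two cases of that theorem: when $d \equiv 0 \pmod 6$ they are $\tfrac{14d}{3}, \tfrac{26d}{3}, \tfrac{38d}{3}$, and when $d \equiv 2 \pmod 6$ they are $\tfrac{14d-1}{3}, \tfrac{26d-1}{3}, \tfrac{38d-1}{3}$. In either case a short check shows the three values are pairwise unequal. Since membership $X \in \mathcal{C}_{A(X_i)}$ forces a saturated embedding $A(X_i) \subset A(X)$, and a saturated embedding of finite-index-free lattices of equal rank preserves the determinant up to the index squared, distinct determinants among rank $3$ lattices preclude any saturated embedding $A(X_i) \subset A(X_j)$ for $i \neq j$. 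Hence $X_i \notin \mathcal{C}_{A(X_j)}$ when $i \neq j$, so the three subvarieties are distinct.

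The final step is to transfer rationality. Every cubic fourfold parametrized by $\mathcal{C}_{A(X_i)}$ lies in $\mathcal{C}_d \cap \mathcal{C}_{14}$ (respectively $\mathcal{C}_{26}$, $\mathcal{C}_{38}$) by the containment from step one, and rationality of all members of $\mathcal{C}_{14}$, $\mathcal{C}_{26}$, $\mathcal{C}_{38}$ is exactly the input from \cite{Fan43, BRS19, KT19, RS19}. Therefore each $\mathcal{C}_{A(X_i)}$ parametrizes rational cubic fourfolds, and their union is the desired family of three codimension-two subvarieties inside $\mathcal{C}_d$.

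The main obstacle is essentially contained in the distinctness argument of step two, and it is mild: one must be slightly careful that a saturated embedding of rank $3$ lattices is in fact an isometry onto a primitive sublattice and hence, being full rank, forces the embedding to be an equality up to a finite index whose square divides the determinant ratio. Since the relevant determinants differ, this rules out embeddings in both directions, but I would double check that no coincidence among $\tfrac{14d}{3}, \tfrac{26d}{3}, \tfrac{38d}{3}$ (or the shifted variants) can occur for the admissible residues of $d$ — a finite and routine verification, since $14, 26, 38$ are distinct and the shift by $-\tfrac{1}{3}$ is uniform. Everything else is a direct assembly of Theorem \ref{mainprop}, Proposition \ref{crucial}, and the cited rationality results.
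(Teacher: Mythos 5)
Your proposal is correct and follows essentially the same route as the paper: the corollary is an immediate specialization of Theorem \ref{mainthm1} to admissible $d$, and your argument reproduces the paper's proof of that theorem verbatim in structure --- apply Theorem \ref{mainprop} to the pairs $(d,14)$, $(d,26)$, $(d,38)$, distinguish the three loci $\mathcal{C}_{A(X_i)}$ by the determinants of the rank $3$ lattices (a saturated full-rank embedding forces equal determinants), and import rationality from $\mathcal{C}_{14}$, $\mathcal{C}_{26}$, $\mathcal{C}_{38}$. No gaps; your extra care about the index-squared divisibility only strengthens the distinctness step.
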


To obtain more information about the Hassett divisors, 
it is of importance to notice that Addington-Thomas \cite[Theorem 4.1]{AT14} showed that for any $d$ satisfies $(\star)$ there exists a cubic fourfold $X\in \mathcal{C}_{8}\cap \mathcal{C}_{d}$ such that $X\in \mathcal{C}_{d'}$ for some admissible value $d'$.
Even if it is conjectured to be rational, however,
it is still unknown whether such a $X$ is rational or not. 
Using the idea of the proof of Theorem \ref{mainprop} and Theorem \ref{mainthm1}, 
we obtain a generalization of Addington-Thomas's result.

\begin{thm}\label{mainthm2}
If $d_{1}$ and $d_{2}$ satisfy the condition $(\star)$, 
then $\mathcal{C}_{14}\cap \mathcal{C}_{d_{1}}\cap \mathcal{C}_{d_{2}}$ contains a codimension-three subvariety in $\mathcal{C}$ parametrizing rational cubic fourfolds.
\end{thm}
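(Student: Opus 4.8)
The plan is to run the lattice construction of Theorem \ref{mainprop} one dimension higher. Concretely, I want to produce a \emph{rank-four} positive definite saturated sublattice $h^{2}\in M\subset L$ that contains three saturated rank-two labellings, of discriminants $14$, $d_{1}$ and $d_{2}$ respectively, and that represents no $2$. Granting such an $M$, Proposition \ref{crucial} gives a nonempty subvariety $\mathcal{C}_{M}\subset\mathcal{C}$ of codimension $4-1=3$, and the ``in particular'' clause of Lemma \ref{key-observation} yields
\[
\emptyset\neq\mathcal{C}_{M}\subset \mathcal{C}_{14}\cap\mathcal{C}_{d_{1}}\cap\mathcal{C}_{d_{2}} .
\]
Since $\mathcal{C}_{M}\subset\mathcal{C}_{14}$ and every cubic fourfold in $\mathcal{C}_{14}$ is rational (\cite{Fan43,BRS19}), every cubic fourfold parametrized by $\mathcal{C}_{M}$ is rational; this is exactly the asserted codimension-three subvariety. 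Thus the whole statement reduces to the construction of one such $M$.

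For the construction, the point is that $14\equiv 2\;(\mathrm{mod}\;6)$ carries the labelling $\left(\begin{smallmatrix}3&1\\1&5\end{smallmatrix}\right)$ of determinant $14$, so its generator should have square $5$ and pair to $1$ with $h^{2}$. Since both hyperbolic planes $U_{1},U_{2}$ are already spent encoding $d_{1}$ and $d_{2}$ exactly as in the three cases of Theorem \ref{mainprop}, I would realize the extra square $5$ inside the first $E_{8}$ summand. Fix a vector $w\in E_{8}$ with $(w.w)=4$; it is automatically primitive, since $E_{8}$ has minimal norm $2$. Set
\[
\beta_{0}:=(1,0,0)+w,\qquad (1,0,0)\in I_{3,0},
\]
so that $(\beta_{0}.\beta_{0})=5$, $(h^{2}.\beta_{0})=1$, and $\beta_{0}$ is orthogonal to $U_{1}\oplus U_{2}$. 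For $d_{1},d_{2}$ I would take $\beta_{1},\beta_{2}$ to be precisely the generators used in the matching case of Theorem \ref{mainprop} (that is, $e_{i}+n_{i}f_{i}$, shifted by $(0,1,0)$ or $(0,0,1)\in I_{3,0}$ when $d_{i}\equiv 2\;(\mathrm{mod}\;6)$), so that $\langle h^{2},\beta_{i}\rangle$ has discriminant $d_{i}$. Put $M:=\langle h^{2},\beta_{0},\beta_{1},\beta_{2}\rangle$.

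The verification then splits into three points. First, $\{h^{2},\beta_{0},\beta_{1},\beta_{2}\}$ is a basis of $M$, so each of $\langle h^{2},\beta_{0}\rangle$, $\langle h^{2},\beta_{1}\rangle$, $\langle h^{2},\beta_{2}\rangle$ is a direct summand of $M$, hence saturated, with the prescribed discriminants $14,d_{1},d_{2}$. Second, $M$ is saturated in $L$: projecting an element of $L\cap(M\otimes\mathbb{Q})$ to $U_{1}$, $U_{2}$ and $E_{8}$ forces the coefficients of $\beta_{1},\beta_{2},\beta_{0}$ to be integral (using primitivity of $e_{i}+n_{i}f_{i}$ and of $w$), after which the $I_{3,0}$-projection forces the coefficient of $h^{2}$ to be integral. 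Third, and this is the heart of the matter, $M$ represents no $2$: writing $v=x h^{2}+t\beta_{0}+y\beta_{1}+z\beta_{2}$, the $E_{8}$-component of $v$ is $t w$ and contributes $(tw.tw)=4t^{2}$ to $(v.v)$, while all remaining components lie in the positive definite $I_{3,0}$ and in the positive lines spanned by $e_{i}+n_{i}f_{i}$, hence contribute nonnegatively. Therefore $t\neq 0$ already gives $(v.v)\geq 4t^{2}\geq 4$, whereas $t=0$ puts $v$ in the rank-three lattice $\langle h^{2},\beta_{1},\beta_{2}\rangle$, for which $(v.v)\geq 3$ is exactly the inequality verified in the relevant case of Theorem \ref{mainprop}. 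By Lemma \ref{key-observation}, $M$ satisfies the nonemptiness hypothesis of Proposition \ref{crucial}, completing the reduction.

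The step I expect to be the real obstacle is precisely the ``represents no $2$'' condition: one must thread all three labellings into a single rank-four lattice without creating any vector of square $2$, and the delicate interactions are exactly the ones already balanced in Theorem \ref{mainprop}. The new input that makes it work is housing the discriminant-$14$ generator in the $E_{8}$ summand, so that the coefficient $t$ of $\beta_{0}$ contributes the manifestly large term $4t^{2}$; this decouples the $\beta_{0}$-direction from the subtle $I_{3,0}$/$U_{i}$ bookkeeping and reduces the entire check to the $t=0$ slice already handled previously.
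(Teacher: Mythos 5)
Your proposal is correct and follows essentially the same route as the paper: both build a rank-four positive definite saturated lattice $M\supset h^{2}$ containing saturated rank-two labellings of discriminants $14$, $d_{1}$, $d_{2}$, verify via Lemma \ref{key-observation} that $M$ represents no $2$, and then invoke Proposition \ref{crucial} together with the rationality of $\mathcal{C}_{14}$. The only difference is cosmetic: the paper realizes the discriminant-$14$ labelling by $\nu=(3,1,0)\in I_{3,0}$ (Gram matrix $\left(\begin{smallmatrix}3&4\\4&10\end{smallmatrix}\right)$), whereas you house it as $(1,0,0)+w$ with $w\in E_{8}$ of norm $4$, which slightly simplifies the positivity check by decoupling the new coordinate.
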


\begin{proof}
The proof is divided into three cases: 

{\bf Case $(1)$:}  Given $d_{1}=6n_{1}$ and $d_{2}=6n_{2}$ with $n_{1}, n_{2}\geq 2$.
We consider the rank $4$ lattice 
$$
h^{2}\in M:=\langle h^{2}, \nu, \alpha_{1}, \alpha_{2} \rangle \subset L
$$ 
where $\nu=(3,1,0)\in I_{3,0}\subset L$, 
$\alpha_{1}:=e_{1}+n_{1}f_{1}$ and $\alpha_{2}:=e_{2}+n_{2}f_{2}$.
Then the Gram matrix of $M$ with respect to the basis $(h^{2}, \nu, \alpha_{1}, \alpha_{2})$ is
$$
\left(\begin{array}{cccc} 
3 & 4 & 0 & 0\\
4  & 10 & 0 & 0\\
0  & 0& 2n_{1} & 0\\
0  & 0 & 0& 2n_{2}
\end{array} \right)
$$
Thus, $h^{2}\in M\subset L$ is positive definite saturated sublattice.
For any nonzero $v=x_{1}h^{2}+x_{2}\nu+x_{3}\alpha_{1}+x_{4}\alpha_{2}\in M$,
we have 
$$
(v.v)=2(x_{1}+2x_{2})^{2}+x_{1}^{2}+2x_{2}^{2}+2n_{1}x_{3}^{2}+2n_{2}x_{4}^{2}\geq 3
$$
since $n_{1}, n_{2}\geq 2$ and at least one of the integers $x_{i}$ is nonzero ($i=1, 2, 3, 4$). 
Hence, Lemma \ref{key-observation} and Proposition \ref{crucial} conclude that $\mathcal{C}_{M}$ is nonempty 
and has codimension $3$. 
In addition, we consider the lattices
$h^{2}\in K_{14}=\langle h^{2}, \nu\rangle$ and 
$h^{2}\in K_{d_{i}}:=\langle h^{2}, \alpha_{i}\rangle \subset M$
with discriminant $d_{i}$.
By Lemma \ref{key-observation} and Proposition \ref{crucial}, 
we obtain ${C}_{M}\subset \mathcal{C}_{K_{d_{1}}} = \mathcal{C}_{d_{1}}$ 
and also $\mathcal{C}_{M}\subset \mathcal{C}_{K_{d_{2}}} = \mathcal{C}_{d_{2}}$. 
Consequently, 
$\emptyset \neq \mathcal{C}_{M}\subset \mathcal{C}_{14}\cap \mathcal{C}_{d_{1}} \cap \mathcal{C}_{d_{2}}$ 
is what we wanted, since every cubic fourfold in $\mathcal{C}_{14}$ is rational.

Since {\bf Case $(2)$} and {\bf Case $(3)$} are the same as {\bf Case $(1)$},
we just give the main ingredients and left the details to the interested readers.

{\bf Case $(2)$:}
Given $d_{1}=6n_{1}$ and $d_{2}=6n_{2}+2$ with $n_{1}\geq 2, n_{2}\geq 1$.
We consider the rank $4$ lattice 
$$
h^{2}\in M:=\langle h^{2}, \nu, \alpha_{1}, \alpha_{2}+(0,0,1) \rangle \subset L
$$
and its sublattices $K_{14}=\langle h^{2}, \nu\rangle$,
$
K_{d_{1}}:=\langle h^{2}, \alpha_{1}\rangle \subset M  
$
and
$
K_{d_{2}}:=\langle h^{2},  \alpha_{2} +(0, 0, 1)\rangle \subset M  
$.

{\bf Case $(3)$:}
Given $d_{1}=6n_{1}+2$ and $d_{2}=6n_{2}+2$ with $n_{1}, n_{2}\geq 1$.
We consider the rank $4$ lattice 
$$
h^{2}\in M:=\langle h^{2}, \nu, \alpha_{1}+(0,1,0), \alpha_{2}+(0,0,1) \rangle \subset L
$$ 
and its sublattices $K_{14}=\langle h^{2}, \nu\rangle$,
$
K_{d_{1}}:=\langle h^{2}, \alpha_{1}+(0,1, 0)\rangle \subset M  
$
and
$
K_{d_{2}}:=\langle h^{2},  \alpha_{2} +(0, 0, 1)\rangle \subset M  
$.
\end{proof}


\end{document}